\newtheorem{thm}{Theorem}
\newtheorem{cor}[thm]{Corollary}
\newtheorem{lem}[thm]{Lemma}
\newtheorem{prop}[thm]{Proposition}
\theoremstyle{definition}
\newtheorem{defn}[thm]{Definition}
\newtheorem{rem}[thm]{Remark}
\numberwithin{equation}{section}
\newcommand{\R}{\ensuremath{\mathbb R}}    
\newcommand{\C}{\ensuremath{\mathbb C}}    
\newcommand{\<}{\langle}
\renewcommand{\>}{\rangle}
\newcommand{\calH}{\mathcal H}
\newcommand{\calR}{\mathcal R}         
\newcommand{\calT}{\mathcal T}         
\newcommand{\calU}{\mathcal U}         
\newcommand{\calV}{\mathcal V}
\newcommand{\la}{\lambda}
\newcommand{\vphi}{\varphi}
\renewcommand{\Re}{\operatorname{Re}}
\newcommand{\dom}{\operatorname{dom}}
\newcommand{\wto}{\rightharpoonup}
\newcommand{\wt}{\widetilde}
\newcommand{\UU}{\mathbb U}
\newcommand{\inte}{\operatorname{int}}
\newcommand{\dist}{\operatorname{dist}}
\newcommand{\sd}{\sigma_{\rm d}}
\begin{document}
\title{Minimizing the energy supply of infinite-dimensional linear port-Hamiltonian systems}
\author[F.\ Philipp, M.\ Schaller, T.\ Faulwasser, B.\ Maschke and K.\ Worthmann]{Friedrich Philipp$^{1}$, Manuel Schaller$^{1}$, Timm Faulwasser$^{2}$, Bernhard Maschke$^{3}$, and Karl Worthmann$^{1}$}
	\thanks{}
	\thanks{$^{1}$Technische Universit\"at Ilmemau, Institute for Mathematics, Germany
		{\tt\small \{friedrich.philipp, manuel.schaller, karl.worthmann\}@tu-ilmenau.de}.} %
	\thanks{$^{2}$TU Dortmund University, Institute of Energy Systems, Energy Efficiency and Energy Economics, Germany
		{\tt\small timm.faulwasser@ieee.org}}%
	\thanks{$^{3}$Univ Lyon, Universit{\'e} Claude Bernard Lyon 1, CNRS, LAGEPP UMR 
		5007, France {\tt\small bernhard.maschke@univ-lyon1.fr}}
	\thanks{{\bf Acknowledgments: }F.\ Philipp was funded by the Carl Zeiss Foundation within the project \textit{DeepTurb---Deep Learning in und von Turbulenz}. 
		M.\ Schaller was funded by the DFG (grant WO\ 2056/2-1, project number 289034702, grant WO\ 2056/7-1, project number 430154635). %
		K.\ Worthmann gratefully acknowledges funding by the German Research Foundation (DFG; grant WO\ 2056/6-1, project number 406141926).
}

\begin{abstract}
We consider the problem of minimizing the supplied energy of infinite-dimensional linear port-Hamiltonian systems and prove that optimal trajectories exhibit the turnpike phenomenon towards certain subspaces induced by the dissipation of the dynamics.  

\smallskip
\noindent \textbf{Keywords.} Optimal control, port-Hamiltonian systems, turnpike properties, dissipativity, infini\-te-dimensional systems
\end{abstract}

\maketitle

\section{Introduction}
The analysis of dynamics is catalysed by intrinsic properties such as passivity, dissipativity or port-Hamiltonian structures. 
It is worth to be noted that the system-theoretic notion of dissipativity as coined by~\cite{Willems71a,Willems72a}, in disguise, leverages optimal control, i.e., the available storage and the required supply are value functions of appropriately defined infinite-horizon optimal control problems (OCPs).

A recent line of research exploits the close relation between dissipativity and optimal control to analyse receding-horizon schemes~\cite{kit:faulwasser18c} and OCPs parametric in the initial conditions and/or horizon length. The latter leads to the recently re-popularized notion of turnpike properties of OCPs. In their easiest variant they refer to the phenomenon that optimal solutions of an OCP for different initial conditions or increasing horizons spend an increasing amount of time close to the optimal steady state, see~\cite{tudo:faulwasser21b} for a recent overview and historical remarks. One can also show that this steady state (a.k.a. the turnpike) corresponds to an equilibrium of the optimality system and that it is an asymptotically stable attractor of the infinite-horizon optimal solutions~\cite{Carlson91,tudo:faulwasser21a}.
Moreover, it turns out that an OCP specific (strict) dissipativity notion, in which the normalized stage cost is considered as supply rate, is key in analysing turnpike properties of OCPs~\cite{Stieler14a,epfl:faulwasser15h,Gruene16a}.
For turnpike properties of infinite dimensional OCPs, as considered below, the linear quadratic case is covered in numerous works, e.g.,  \cite{Breiten2018,Gugat2018,Gugat2016,Gruene2018c,Gruene2019} and for the nonlinear case see \cite{Esteve2020,Gruene2021,Pighin2020,Porretta2013,Trelat2016,Trelat18}. All of these references have in common that they consider non-singular problems and optimal equilibria as turnpike reference points.

At first glance, the research on OCPs analysis via dissipativity and the manifold results on port-Hamiltonian (pH) systems are disjoint. Yet, in a recent paper, we analysed OCPs for port-Hamiltonian systems considering the intrinsic pH objective of minimal energy supply. Intuitively, the (not-necessarily strict) dissipativity of such OCPs is clear, as pH-systems are passive with respect to the supplied energy.  However, in~\cite{Schaller2021a} we have shown that in case of linear finite-dimensional pH systems, the minimization of energy supply induces a singular OCP which is strictly dissipative with respect to a specific subspace. More precisely, the analysis in~\cite{Schaller2021a} leverages the port-Hamiltonian structure to verify strict dissipativity  with respect to the kernel of the dissipation matrix of the pH system.

%
%

%

In this work, we extend the analysis towards OCPs for  port-Hamil\-toni\-an systems on an infinite-dimensional Hilbert space $\calH$. 
The considered OCP reads as follows:
\begin{subequations}\label{e:ocp}
\begin{align}
\min_{u\in\calU_T}&\int_0^T\Re\<u(t),y(t)\>\,dt\label{e:cost}\\
\dot x(t) &= (J-R)x(t) + Bu(t)\label{e:dyn}\\
y(t) &= B^*x(t)\label{e:out}\\
x(0) &= x_0,\quad x(T) = x_T.\label{e:initial}
\end{align}
\end{subequations}
where $x_0,x_T\in\calH$, $R$ is a (possibly unbounded) non-negative self-adjoint operator and $J$ is a (possibly unbounded) skew-adjoint operator on $\calH$. The setting will be made precise in Section~\ref{s:ocp}. Under suitable assumptions, the operator $A := J-R$ is then a generator of a $C_0$-semigroup of contractions. Also, the control constraint $\calU_T$ will be introduced in Section~\ref{s:ocp}.
As the above problem is singular, i.e., there is no quadratic term of the control in the objective functional, established methods to derive turnpike properties, i.e., strict dissipativity with respect to a steady state or Riccati theory cannot be applied.
Hence the present paper transfers the subspace approach \cite{Schaller2021a} to the infinite-dimensional setting of~\eqref{e:ocp}. 

The remainder is structured as follows: Section \ref{s:ocp} details the infinite-dimensional setting of OCP \eqref{e:ocp} and gives sufficient conditions for the existence of an optimal solution. 
Section \ref{s:TP} first presents two numerical examples of subspace state turnpikes in instances of OCP \eqref{e:ocp}, before we turn towards  the proof of  sufficient conditions. 
The paper ends with a summary in Section \ref{sec:con}.

\section{The Optimal Control Problem}\label{s:ocp}
As already indicated in the Introduction, our main objective is to analyze port-Hamiltonian optimal control problems (OCPs) of the form \eqref{e:ocp}. To this end, let $\calH$ be a separable Hilbert space with scalar product $\langle\cdot,\cdot\rangle$ and corresponding norm $\|\cdot\|$. By $L(\calH)$ we denote the set of bounded linear operators from $\calH$ to itself. The {\em resolvent set} $\rho(T)$ of a linear operator $T$ in $\calH$ is defined by $\rho(T) := \{s\in\C : (T-s)^{-1}\in L(\calH)\}$. The {\em spectrum} of $T$ is the complement $\sigma(T) := \C\setminus\rho(T)$.

Let $J$ be a (possibly unbounded) skew-adjoint operator on $\calH$ (i.e., $J^* = -J$) and let $R$ be a positive semi-definite (possibly unbounded) self-adjoint operator on $\calH$ (i.e., $R^* = R\ge 0$). Clearly, both $J$ and $R$ are generators of contractive $C_0$-semigroups. Here, we assume that one of the following conditions is satisfied:
\begin{itemize}
	\item $J$ is $R$-bounded with $R$-bound smaller than one or
	\item $R$ is $J$-bounded with $J$-bound smaller than one.
\end{itemize}
In both cases, the operator
$$
A := J - R
$$
(defined on $\dom J\cap\dom R$) is a generator of a $C_0$-semigroup of contractions (see \cite[Thm.\ III.2.7]{engelnagel}) and therefore maximal dissipative. We shall denote this semigroup by $(\calT(t))_{t\ge 0}$.
%
%

Here, we consider the case of a finite number of input ports being linearly mapped to the infinite-dimensional state space, i.e., the operator
$$
B : \C^m\to\calH
$$
in \eqref{e:dyn} is linear and bounded.

Given $x_0\in\calH$ and $u\in L^1_{\rm loc}(0,\infty;\C^m)$, it is in general not guaranteed that a classical solution of \eqref{e:dyn} exists. In fact, for general semigroups, the existence of a classical solution require two ingredients: First, clearly $x_0\in\dom A$ and second continuous right-hand sides are required, e.g., $Bu\in H^1(0,T;\calH)$ or $Bu\in C(0,T;\dom A)$ to ensure the existence of a classical solution, cf.\ \cite[Cor.\ VI.7.6, Cor.\ VI.7.8]{engelnagel}. This framework is quite restrictive in terms of optimal control, where one might aim to choose, e.g., piecewise constant control inputs. Therefore, we consider mild solutions here. The (mild) solution of \eqref{e:dyn} is defined by
\begin{align}
\label{e:varofconst}
x(t;x_0,u) := \mathcal{T}(t)x_0 + \int_0^t\mathcal{T}(t-s)Bu(s)\,ds.
\end{align}
Note that the mild solution is always a continuous $\calH$-valued function.

We shall frequently make use of the following auxiliary lemma, which easily follows from the equivalence $x_n\to x$ in $L^2(0,T;X)$ iff $\|x_n-x\|\to 0$ in $L^2(0,T;\R)$.

\begin{lem}\label{l:subsequence}
Let $X$ be a Hilbert space and $(x_n)\subset L^2(0,T;X)$ such that $x_n\to x$ in $L^2(0,T;X)$. Then there exists a subsequence $(x_n')$ of $(x_n)$ such that $x_n'(t)\to x(t)$ in $X$ for a.e.\ $t\in [0,T]$.
\end{lem}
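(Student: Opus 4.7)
The plan is to reduce the vector-valued statement to the scalar-valued case by exploiting the equivalence highlighted by the authors: convergence $x_n\to x$ in $L^2(0,T;X)$ is equivalent to convergence of the real-valued norm functions $\|x_n(\cdot)-x(\cdot)\|_X\to 0$ in $L^2(0,T;\R)$. Once the problem is transported to $L^2(0,T;\R)$, the statement is a classical consequence of the proof of the Riesz–Fischer theorem.

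In more detail, first I set $f_n(t) := \|x_n(t)-x(t)\|_X$. The functions $f_n$ are measurable real-valued functions on $[0,T]$, and the indicated equivalence gives $\|f_n\|_{L^2(0,T;\R)} = \|x_n-x\|_{L^2(0,T;X)} \to 0$. Then I invoke the standard result that every sequence converging to zero in $L^2(0,T;\R)$ (in fact, in any $L^p$ on a $\sigma$-finite measure space) admits a subsequence converging pointwise a.e. to zero; the customary argument extracts $f_{n_k}$ with $\|f_{n_k}\|_{L^2}\le 2^{-k}$, notes that $\sum_k f_{n_k}$ lies in $L^2$ by the triangle inequality and hence is a.e.\ finite, and concludes that $f_{n_k}(t)\to 0$ for a.e.\ $t\in[0,T]$. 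Setting $x_n' := x_{n_k}$, the a.e.\ convergence $\|x_n'(t)-x(t)\|_X\to 0$ is precisely the conclusion sought.

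There is really no substantial obstacle here; the only conceptual content is the reduction to the scalar case, which the authors themselves flag, and the rest is a textbook consequence of the completeness of $L^2$. For this reason I would keep the proof to essentially one sentence citing the equivalence together with a reference to the scalar Riesz–Fischer argument, and refrain from reproducing the standard subsequence extraction.
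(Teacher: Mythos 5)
Your proof is correct and follows exactly the route the paper indicates: the authors give no detailed proof but state that the lemma ``easily follows from the equivalence $x_n\to x$ in $L^2(0,T;X)$ iff $\|x_n-x\|\to 0$ in $L^2(0,T;\R)$,'' which is precisely your reduction to the scalar Riesz--Fischer subsequence argument. No discrepancies to report.
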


Let $T>0$. In what follows, we shall eventually (and particularly in the OCP \eqref{e:ocp}) restrict the controls $u$ on time horizons $[0,T]$ to the function class
$$
\calU_T := \big\{u\in L^2(0,T;\C^m) : u(t)\in\UU\text{ a.e.}\big\},
$$
where $\UU\subset\C^m$ is a compact and convex set having the origin in its interior $\inte\UU$. Note that $\calU_T$ is bounded, closed (Lemma \ref{l:subsequence}), and convex in $L^2(0,T;\C^m)$.

We say that $x_1\in\calH$ is reachable from $x_0\in\calH $ at time $T$, if there exists $u\in\calU_T$ such that $x(T;x_0,u) = x_1$. By $\calR_T(x_0)$ we denote the set of all states which are reachable from $x_0$ at time $T$. Moreover, we set $\calR_0(x_0) := \{x_0\}$ and
$$
\calR(x_0) := \bigcup_{T\ge 0}\calR_T(x_0).
$$

\subsection{Higher regularity of mild solutions and dissipativity}
The energy Hamiltonian of the system \eqref{e:dyn}--\eqref{e:out} is given by
$$
H(x) := \tfrac 12\|x\|^2.
$$
One of the central identities for finite-dimensional port-Hamiltonian systems is a {\em dissipation equality}. If $\calH$ is finite-dimensional, the dissipation equality reads
$$
\int_0^T\!\!\Re\<u,y\>\,dt = H(x(T))-H(x(0)) + \int_0^T\!\!\|R^\frac 12x\|^2\,dt
$$
and holds for all solutions of the system \eqref{e:dyn}--\eqref{e:out}. It is clear that, if $R$ is unbounded, this energy balance only makes sense if $x(t)\in\dom R^{\frac12}$ for a.e.\ $t$. Whereas this is clearly the case for classical solutions $x\in C(0,T;\dom A)\cap C^1(0,T;\calH)$, the next proposition shows that this regularity property also holds for mild solutions.

\begin{prop}\label{p:higherreg}
For each $x_0\in\calH$ and each $u\in L^1(0,T;\C^m)$ the mild solution $x = x(\,\cdot\,;x_0,u)$ satisfies $x(t)\in\dom R^{\frac 12}$ for a.e.\ $t\in [0,T]$. We have $R^{\frac 12}x\in L^2(0,T;\calH)$ and
\begin{equation}\label{e:diss}
\int_0^T\!\!\Re\<Bu(t),x(t)\>\,dt = (H\circ x)\Big|_0^T + \int_0^T\!\!\|R^\frac 12x(t)\|^2\,dt.
\end{equation}
In particular, we have $\calT(t)x_0\in\dom R^{\frac 12}$ for a.e.\ $t>0$ and $R^{\frac 12}\calT(\cdot)x_0\in L^2(0,T;\calH)$.
\end{prop}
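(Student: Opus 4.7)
The plan is to establish the dissipation equality first for classical solutions by direct differentiation, and then to extend it to arbitrary mild solutions via approximation of the data $(x_0,u)$ by smooth data. The crucial technical step is to upgrade the convergence of $R^{1/2}x^n$ to \emph{strong} $L^2$-convergence, which I achieve by exploiting the linearity of \eqref{e:dyn} and applying the classical equality to the \emph{difference} of two approximants.

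\textbf{Classical solutions.} Suppose first that $x_0\in\dom A$ and $u\in C^1([0,T];\C^m)$. Since $B$ is bounded, $Bu\in C^1([0,T];\calH)$, and by \cite[Cor.~VI.7.6]{engelnagel} the mild solution $x$ is classical with $x(t)\in\dom A=\dom J\cap\dom R\subseteq\dom R^{1/2}$ for every $t$. Differentiating $H(x(t))=\tfrac12\|x(t)\|^2$ and using $\dot x=Jx-Rx+Bu$, skew-adjointness of $J$ (which gives $\Re\<Jx,x\>=0$ for $x\in\dom J$) and $\<Rx,x\>=\|R^{1/2}x\|^2$, I obtain
$$\tfrac{d}{dt}H(x(t)) = -\|R^{1/2}x(t)\|^2 + \Re\<Bu(t),x(t)\>,$$
and integrating over $[0,T]$ yields \eqref{e:diss}.

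\textbf{Approximation and Cauchy argument.} For general $x_0\in\calH$ and $u\in L^1(0,T;\C^m)$, pick $x_0^n\in\dom A$ with $x_0^n\to x_0$ (by density of $\dom A$) and $u^n\in C^1([0,T];\C^m)$ with $u^n\to u$ in $L^1$. The classical solutions $x^n:=x(\,\cdot\,;x_0^n,u^n)$ satisfy, via \eqref{e:varofconst} and contractivity of $\calT$,
$$\|x^n(t)-x(t)\|\,\le\,\|x_0^n-x_0\| + \|B\|\,\|u^n-u\|_{L^1(0,T;\C^m)},$$
so $x^n\to x$ uniformly on $[0,T]$. The difference $x^n-x^m$ is itself a classical solution with data $(x_0^n-x_0^m,u^n-u^m)$, and applying the already-proved classical equality to it yields
$$\int_0^T\!\!\|R^{1/2}(x^n-x^m)(t)\|^2\,dt + \tfrac12\|x^n(T)-x^m(T)\|^2 = \tfrac12\|x_0^n-x_0^m\|^2 + \int_0^T\!\!\Re\<B(u^n-u^m),x^n-x^m\>\,dt.$$
The right-hand side tends to $0$ as $m,n\to\infty$ by the previous estimate and uniform boundedness of $\{x^n\}$, so $\{R^{1/2}x^n\}$ is Cauchy in $L^2(0,T;\calH)$ with some limit $z$.

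\textbf{Identification and conclusion.} By Lemma~\ref{l:subsequence}, along a subsequence $R^{1/2}x^n(t)\to z(t)$ in $\calH$ for a.e.\ $t$, while $x^n(t)\to x(t)$ uniformly. Since $R$ is non-negative self-adjoint, $R^{1/2}$ is closed, and closedness forces $x(t)\in\dom R^{1/2}$ with $R^{1/2}x(t)=z(t)$ for a.e.\ $t$; in particular $R^{1/2}x\in L^2(0,T;\calH)$. Each term of the classical equality for $x^n$ passes to the limit (the control-state pairing by the convergences above, the boundary values by continuity of $H$, the dissipation term by strong $L^2$-convergence), yielding \eqref{e:diss}. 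The final assertion follows by specializing to $u=0$. The main obstacle is obtaining \emph{equality} rather than an inequality in \eqref{e:diss}: a direct weak-limit argument would only give $\le$ by weak lower semicontinuity of the $L^2$-norm, and it is the Cauchy trick via differences that delivers the required strong convergence.
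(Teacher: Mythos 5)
Your proposal is correct and follows essentially the same route as the paper's own proof: establish \eqref{e:diss} for classical solutions by differentiating $H$, approximate general data, apply the classical identity to differences of approximants to get that $(R^{1/2}x^n)$ is Cauchy in $L^2$, and identify the limit via Lemma~\ref{l:subsequence} and closedness of $R^{1/2}$. The only difference is that you spell out more explicitly why the Cauchy trick on differences is needed, which is a useful remark but not a change of method.
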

\begin{proof}
First of all, assume that $x_0\in\dom A$ and $u\in C^1(0,T;\C^m)$. The mild solution $x := x(\,\cdot\,;x_0,u)$ is then a classical solution, i.e., $x\in C^1(0,T;\calH)$ and $x(t)\in\dom A$ for all $t\in [0,T]$. For $t\in [0,T]$ we obtain
\begin{align*}
\tfrac d{dt}H(x(t))
&= \Re\<\dot x(t),x(t)\> = \Re\<Ax(t) + Bu(t),x(t)\>\\
&= \Re\<Bu(t),x(t)\> -\<Rx(t),x(t)\>.
\end{align*}
Integrating this shows that $(u,x)$ satisfies \eqref{e:diss}.

Now, let $x_0\in\calH$ and $u\in L^1(0,T;\C^m)$ be arbitrary. Then we find sequences $((x_0)_n)\subset\dom A$ and $(u_n)\subset C^1(0,T;\C^m)$ such that $(x_0)_n\to x_0$ in $\calH$ and $u_n\to u$ in $L^1(0,T;\C^m)$. Set $x_n := x(\,\cdot\,;(x_0)_n,u_n)$. For $t\in [0,T]$ we have
$$
\|x_n(t) - x(t)\|\le\|(x_0)_n - x_0\| + \|B\|\|u_n-u\|_{L^1},
$$
which implies that $x_n\to x$ uniformly on $[0,T]$.

Set $u_{nm} := u_n-u_m$ and $x_{nm} := x_n-x_m$. Then $x_{nm} = x(\,\cdot\,;(x_0)_n-(x_0)_m,u_{nm})$. Hence, $(u_{nm},x_{nm})$ satisfies \eqref{e:diss} and we obtain $\int_0^T\|R^{\frac 12}x_{nm}(t)\|^2\,dt\to 0$ as $n,m\to\infty$. That is, $(R^{\frac 12}x_n)$ is a Cauchy sequence in $L^2(0,T;\calH)$ and thus converges to some $z\in L^2(0,T;\calH)$. In particular, there exists a subsequence $(x_n')$ of $(x_n)$ such that $R^{\frac 12}x_n'(t)\to z(t)$ for a.e.\ $t\in [0,T]$, see Lemma \ref{l:subsequence}. From the closedness of $R^{\frac 12}$ we infer that $x(t)\in\dom R^{\frac 12}$ for these $t$ and $R^{\frac 12}x(t) = z(t)$ so that $R^{\frac 12}x = z\in L^2(0,T;\calH)$. It is now clear that \eqref{e:diss} is satisfied.
\end{proof}

In view of Proposition~\ref{p:higherreg}, we can provide the following system-theoretic property of \eqref{e:dyn}.

\begin{cor}
Considering the supply rate $\Re\langle Bu,x\rangle$, the dynamics \eqref{e:dyn} are dissipative in the following sense: for any $x_0\in\calH$ and $u\in L^1(0,T;\C^m)$ the solution of \eqref{e:dyn} with $x(0)=x_0$ satisfies \eqref{e:diss}.
\end{cor}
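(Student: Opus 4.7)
The plan is that this Corollary is essentially a direct reformulation of Proposition~\ref{p:higherreg}; no genuinely new analytical work is needed. First I would invoke Proposition~\ref{p:higherreg} with the given $x_0\in\calH$ and $u\in L^1(0,T;\C^m)$, which asserts exactly that the mild solution $x=x(\,\cdot\,;x_0,u)$ defined via the variation-of-constants formula \eqref{e:varofconst} satisfies $x(t)\in\dom R^{1/2}$ for a.e.\ $t\in[0,T]$, that $R^{1/2}x\in L^2(0,T;\calH)$, and that the dissipation equality \eqref{e:diss} holds. That statement is literally the claim of the Corollary, so the verification amounts to citing the proposition and pointing out that its hypotheses match ours verbatim.

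Second, I would add a sentence interpreting \eqref{e:diss} as a dissipativity relation in the sense of Willems. Since the dissipation term $\int_0^T\|R^{1/2}x(t)\|^2\,dt$ is non-negative (indeed well-defined, thanks to the regularity provided by Proposition~\ref{p:higherreg}), the equality \eqref{e:diss} yields the inequality
\[
H(x(T))-H(x(0)) \;\le\; \int_0^T\Re\<Bu(t),x(t)\>\,dt,
\]
which is the standard dissipation inequality with storage function $H(x)=\tfrac12\|x\|^2$ and supply rate $\Re\<Bu,x\>$. The non-negativity of $R$ (guaranteed by the standing assumption $R=R^*\ge 0$) is what turns the equality into an inequality with the correct sign.

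There is no real obstacle in this proof; the only subtle point—extending the classical energy balance to mild solutions, which requires the regularity $x(t)\in\dom R^{1/2}$ a.e.\ together with $R^{1/2}x\in L^2(0,T;\calH)$—has already been dealt with by the density/Cauchy-sequence argument inside the proof of Proposition~\ref{p:higherreg}. Accordingly, the proof of the Corollary should be at most a few lines, consisting of the citation of Proposition~\ref{p:higherreg} for the identity \eqref{e:diss} followed by the observation that non-negativity of the $\|R^{1/2}x\|^2$-term converts this equality into the Willems-type dissipation inequality.
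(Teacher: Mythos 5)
Your proposal is correct and matches the paper's treatment: the corollary is stated as an immediate consequence of Proposition~\ref{p:higherreg} (the paper gives no separate proof beyond the phrase ``In view of Proposition~\ref{p:higherreg}''), and your citation of that proposition together with the observation that the hypotheses coincide is exactly what is needed. Your additional remark that non-negativity of the $\|R^{1/2}x\|^2$-term turns \eqref{e:diss} into a Willems-type dissipation inequality is a harmless and accurate elaboration.
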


\subsection{Existence of optimal solutions}
In this part we shall provide a brief proof of the following existence theorem.

\begin{thm}
If  $x_T \in \calR_T(x_0)$, then there is an optimal solution $u^\star \in \calU_T$ of \eqref{e:ocp}.
\end{thm}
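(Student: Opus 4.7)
My approach is the direct method of the calculus of variations. First I would denote the feasible set by $\calF := \{u\in\calU_T : x(T;x_0,u) = x_T\}$, which is nonempty by hypothesis. The key reduction is to substitute the dissipation equality \eqref{e:diss} from Proposition~\ref{p:higherreg} (valid for $u\in L^2(0,T;\C^m)\subset L^1(0,T;\C^m)$) into the cost: for any $u\in\calF$ both endpoints are fixed, so
\[
J(u) := \int_0^T\!\!\Re\<u,y\>\,dt \;=\; H(x_T) - H(x_0) + \int_0^T\!\!\|R^{1/2} x(t;x_0,u)\|^2\,dt,
\]
and in particular $J$ is bounded below on $\calF$ by $H(x_T) - H(x_0)$. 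I would then pick a minimizing sequence $(u_n)\subset\calF$.

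For compactness, I would observe that $\calU_T$ is bounded, convex, and closed in the reflexive Hilbert space $L^2(0,T;\C^m)$; by Mazur's theorem it is weakly closed, so Banach--Alaoglu yields a subsequence $u_n\wto u^\star\in\calU_T$. From \eqref{e:varofconst}, the input-to-terminal-state map $\Phi : u \mapsto x(T;x_0,u) = \calT(T)x_0 + \int_0^T\calT(T-s)Bu(s)\,ds$ is affine and bounded from $L^2(0,T;\C^m)$ to $\calH$, hence weakly continuous. Therefore $\Phi(u^\star) = \lim_n \Phi(u_n) = x_T$, so $u^\star\in\calF$.

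It remains to establish weak lower semicontinuity of $J$. Applying \eqref{e:diss} to an arbitrary $u\in L^2(0,T;\C^m)$ (without invoking the terminal constraint) yields $J(u) = H(x(T;x_0,u)) - H(x_0) + \|R^{1/2} x(\cdot;x_0,u)\|_{L^2(0,T;\calH)}^2$, so $J$ equals a constant plus the squared norms of the two affine maps $\Phi$ and $\Psi: u \mapsto R^{1/2} x(\cdot;x_0,u)$. To verify boundedness of $\Psi$ as an operator $L^2(0,T;\C^m) \to L^2(0,T;\calH)$, I would apply \eqref{e:diss} with zero initial state, drop the nonnegative $H(x(T;0,u))$ term, and combine the resulting inequality $\|R^{1/2} x(\cdot;0,u)\|_{L^2}^2 \le \int_0^T \Re\<Bu, x(\cdot;0,u)\>\,dt$ with the elementary bound $\|x(t;0,u)\|\le\|B\|\sqrt{t}\,\|u\|_{L^2}$ coming from \eqref{e:varofconst} and the contractivity of $\calT$, closing the estimate via Cauchy--Schwarz. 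Consequently $J$ is convex and continuous on $L^2(0,T;\C^m)$, hence weakly lower semicontinuous, and $J(u^\star)\le\liminf_n J(u_n) = \inf_\calF J$, so $u^\star$ is optimal.

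I expect the main technical point to be precisely this quadratic $L^2$-to-$L^2$ bound on $\Psi$: Proposition~\ref{p:higherreg} delivers the fiberwise statement $R^{1/2}x\in L^2$ but not a norm estimate, and bootstrapping it from the dissipation equality is the only place where something beyond abstract direct-method machinery is needed. Once this bound is in hand, all remaining steps are standard.
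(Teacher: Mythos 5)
Your proposal is correct and follows essentially the same route as the paper: the direct method on the bounded, closed, convex admissible set, with the dissipation equality \eqref{e:diss} used to rewrite the singular cost as a convex, continuous (hence weakly lower semicontinuous) quadratic functional of $u$. The only difference is that you explicitly justify the $L^2$-to-$L^2$ boundedness of the affine map $u\mapsto R^{1/2}x(\cdot;x_0,u)$ via the dissipation inequality with zero initial state, a point the paper asserts without proof when introducing the operator $C$ in \eqref{e:cost_R}; your bootstrap argument for this bound is valid.
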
 
\begin{proof}
The set of admissible controls
$$
\calU_\text{ad}:=\{u\in \calU_T\,|\,x(T;x_0,u)=x_T\}
$$
is bounded, closed and convex due to the corresponding properties of $\calU_T$ and linearity and boundedness of $\vphi : L^2(0,T;\C^m)\to\calH$, $\vphi(u):=\int_0^T\calT(T-t)Bu(t)\,dt$. Utilizing the dissipativity equality \eqref{e:diss}, the cost functional $\int_0^T \Re\langle Bu(t),x(t)\rangle\,dt $ can equivalently be replaced by
\begin{align}\label{e:cost_R}
J(u):=\int_0^T\|R^{\frac 12}x(t)\|^2\,dt = \|Cu\|_{L^2(0,T;\calH)}
\end{align}
with some bounded affine-linear operator
$$
C : L^2(0,T;\C^m) \to L^2(0,T;\calH),
$$
i.e., the cost functional $J$ is convex and continuous and hence weakly lower semi-continuous.

Let $(u_n)\subset\calU_\text{ad}$ such that $J(u_n)\to a := \inf\{J(u)\,|\,u\in \calU_\text{ad}\}$. Boundedness of $\calU_\text{ad}$ implies that there is some $u^\star\in L^2(0,T;\C^m)$ such that $u_n\wto u^\star$. As $\calU_\text{ad}$ is closed and convex, it is weakly sequentially closed, so that $u^\star\in \calU_\text{ad}$. Now, the lower semi-continuity of $J$ implies
\begin{align*}
a \leq J(u^\star) = J\left(\lim_{n\to\infty} u_n\right) \leq \lim_{n\to \infty} J(u_n) = a,
\end{align*}
and hence $J(u^\star)=a$. As $u^\star\in\calU_\text{ad}$, it follows that $u^\star$ is an optimal solution of \eqref{e:ocp}.
\end{proof}

\section{Turnpike analysis of optimal solutions} \label{s:TP}
In \cite{Schaller2021a} we considered the port-Hamilto\-ni\-an OCP \eqref{e:ocp} in the finite-dimensional setting, i.e., $\dim\calH < \infty$, and proved that optimal solutions exhibit a {\em turnpike} towards the conservative subspace $\ker R$, i.e., they stay close to this subspace for the majority of the time. Since the situation in the infinite-dimensional case is obviously more involved, let us first evaluate some numerical experiments.

\subsection{Numerical Examples}
We consider two examples: (a) the diffusion equation and (b) the Timoshenko beam. In example (a) we have $J=0$, whereas $R$ is bounded in example (b). Hence, both cases fit into our setting.

{\bf Diffusion equation}. We consider a diffusion equation with Neumann boundary conditions on a bounded domain $\Omega\subset \R^n, n=2,3$, with locally Lipschitz boundary. That is, we set $\calH = L^2(\Omega,\R) = L^2(\Omega)$, $J=0$, and considering a constant diffusivity $d>0$, we define the diffusion operator
$$
R = -d\Delta, \quad \dom R = \left\{x\in H^2(\Omega)\,:\,\frac{\partial x}{\partial \nu}=0\right\},
$$
where $\frac{\partial x}{\partial \nu}=\langle \nabla x,\nu\rangle$ denotes the outward unit normal derivative with outward unit normal $\nu$ of $\Omega$. It is well known that the operator $R$ is self-adjoint and non-negative. This system is also called Dissipative Port Hamiltonian as it may be decomposed into the system
\begin{equation}
\begin{pmatrix}
\frac{\partial x}{\partial t} \\
F 
\end{pmatrix} =
\begin{pmatrix}
0 & - div\\
- grad & 0
\end{pmatrix}
\begin{pmatrix}
\frac{\partial G}{\partial x} \\
\Phi  
\end{pmatrix}
\end{equation} where $G(x)$ is Gibbs' free energy,  $\Phi$ is the flux of species, $F$ is the driving force of the diffusion and completed with the Gibbs' diffusion relation $\Phi=d\:F$. The matrix differential operator is well-know to be a Hamiltonian operator which may be extended to a Stokes-Dirac structure encompassing boundary port variables being the the mass flux and the chemical potential at the boundary \cite{Baaiu09b}. Note that the Neumann boundary condition corresponds to a zero driving force at the boundary, corresponding to the thermodynamical equilibrium.

As the control operator we let
$$
(Bu)(t,\omega) := \sum_{i=1}^{m} \psi_i(\omega)u_i(t),
$$
where $(\psi_i)_{1\leq i\leq m} \subset  L^2(\Omega)$ is a given family of shape functions modeling the mass influx distributed 
over the domain (for instance, due to the varying porosity of a membrane). The conjugated output is then given by
$$
y = B^*x = \begin{pmatrix}
\int_\Omega \psi_1(\omega)x(\omega)\,d\omega\\
\dots\\
\int_\Omega \psi_m(\omega)x(\omega)\,d\omega\\
 \end{pmatrix}
$$ and is the mean chemical potential.

Motivated by the results and observations in \cite{Schaller2021a}, we expected a turnpike behavior of optimal solutions of \eqref{e:ocp} towards the subspace $\ker R^{\frac12}$, which obviously consists of the constant functions. In other words, we expected $R^{\frac 12}x(t)$ to perform a turnpike towards zero. And indeed, our numerical experiments confirm this expectation.

As an example, we consider the diffusion of a species on a one dimensional domain described by $\Omega = [0,1]$, $m=1$, and 
$$
\psi_1(\omega)= \begin{cases}
1 \quad &\omega\in [1/2-\delta, 1/2+\delta]\\
0 \quad &\text{else},
\end{cases}$$ 
for some $\delta >0$, i.e., we have one actuator with diameter $2\delta$ in the center of the one-dimensional domain. Further, we set the diffusivity to $d = 0.1$. We discretize the rod with 21 grid points in space and standard finite differences. Concerning time discretization of $[0,T]$, where $T = 5,10,20,40$, we use $N=251,501,1001,2001$ (resp.) shooting intervals and a Runge Kutta scheme of order four on all shooting intervals. As initial condition and terminal condition, we set $x_0(\omega) = \sin(\pi \omega)$ and $x_T(\omega)\equiv 2$.

In Figure~\ref{fig:statecontrol} we depict the optimal solutions of \eqref{e:ocp} for different optimization horizons. We observe that the control $u$ exhibits a classical turnpike property towards $u=0$, i.e., it stays close to the origin for the majority of the time, while the state $x$ obviously does not show such a behavior.
\begin{figure}[!ht]
\centering	
\includegraphics[scale = 0.5]{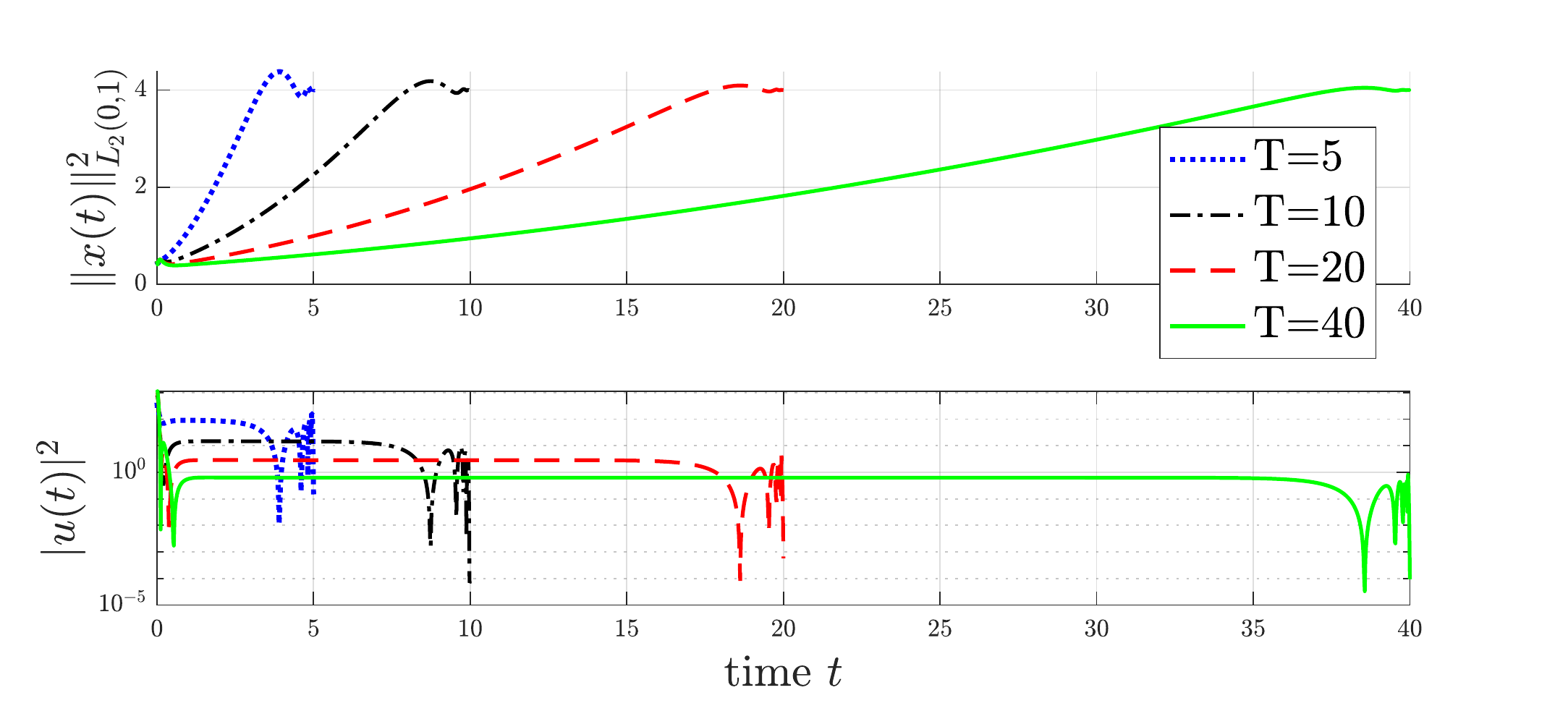}
\caption{Energy of optimal state and control over time for different optimization horizons for the diffusion equation.}\label{fig:statecontrol}
\end{figure}
However, in Figure~\ref{fig:turpike} we observe the expected turnpike behavior towards $\ker R^{\frac12}$. We also notice a smaller distance to the turnpike when increasing the time horizons. Further, $R^{\frac 12}x(t) = 0$ is only reached exactly at the terminal time due to $x_T\in \ker R^{\frac12}$.
\begin{figure}[!ht]
\centering	
\includegraphics[scale = 0.5]{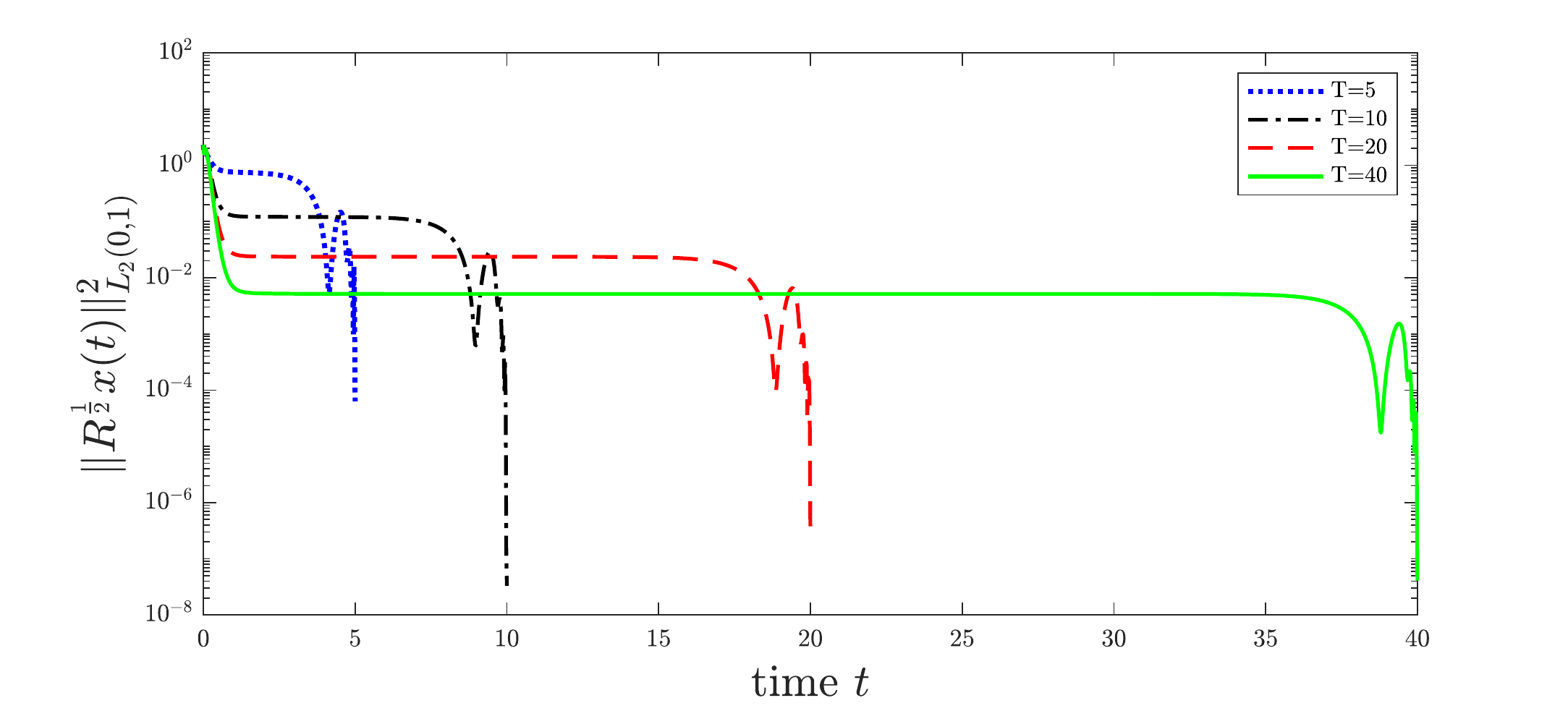}
	\caption{The function $\|R^{\frac 12}x(t)\|_{L^2(0,1)}^2$ for different optimization horizons for the diffusion equation.}\label{fig:turpike}
\end{figure}
In Figure~\ref{fig:snapshots} the distribution of species at time $t=T/2$ is depicted. We see that with an increasing value of the horizon $T$, the corresponding snapshot becomes more and more constant in space, i.e., closer to the conservative subspace $\ker R$.
\begin{figure}[!ht]
	\centering
	\includegraphics[scale = 0.5]{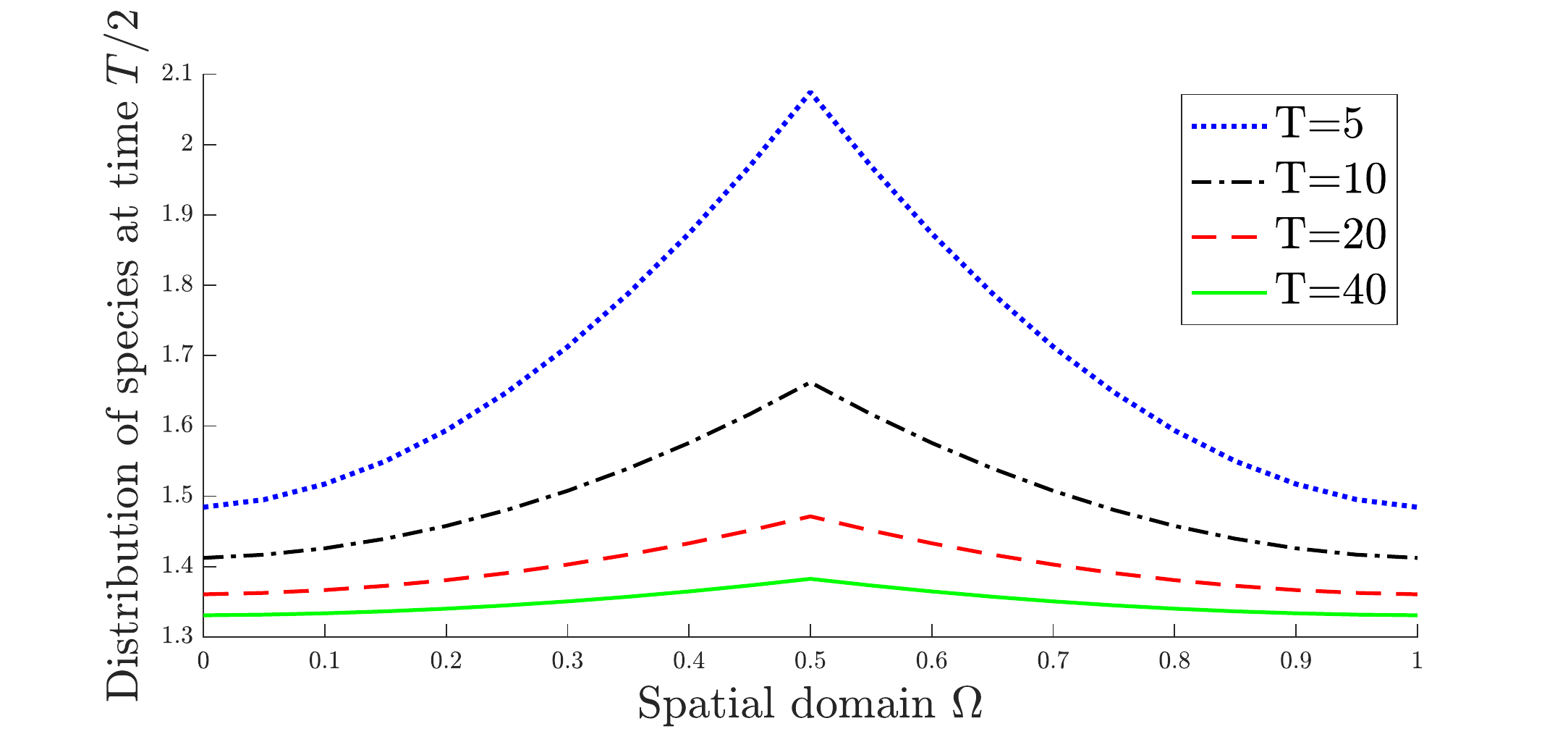}\\
   \includegraphics[scale = 0.5]{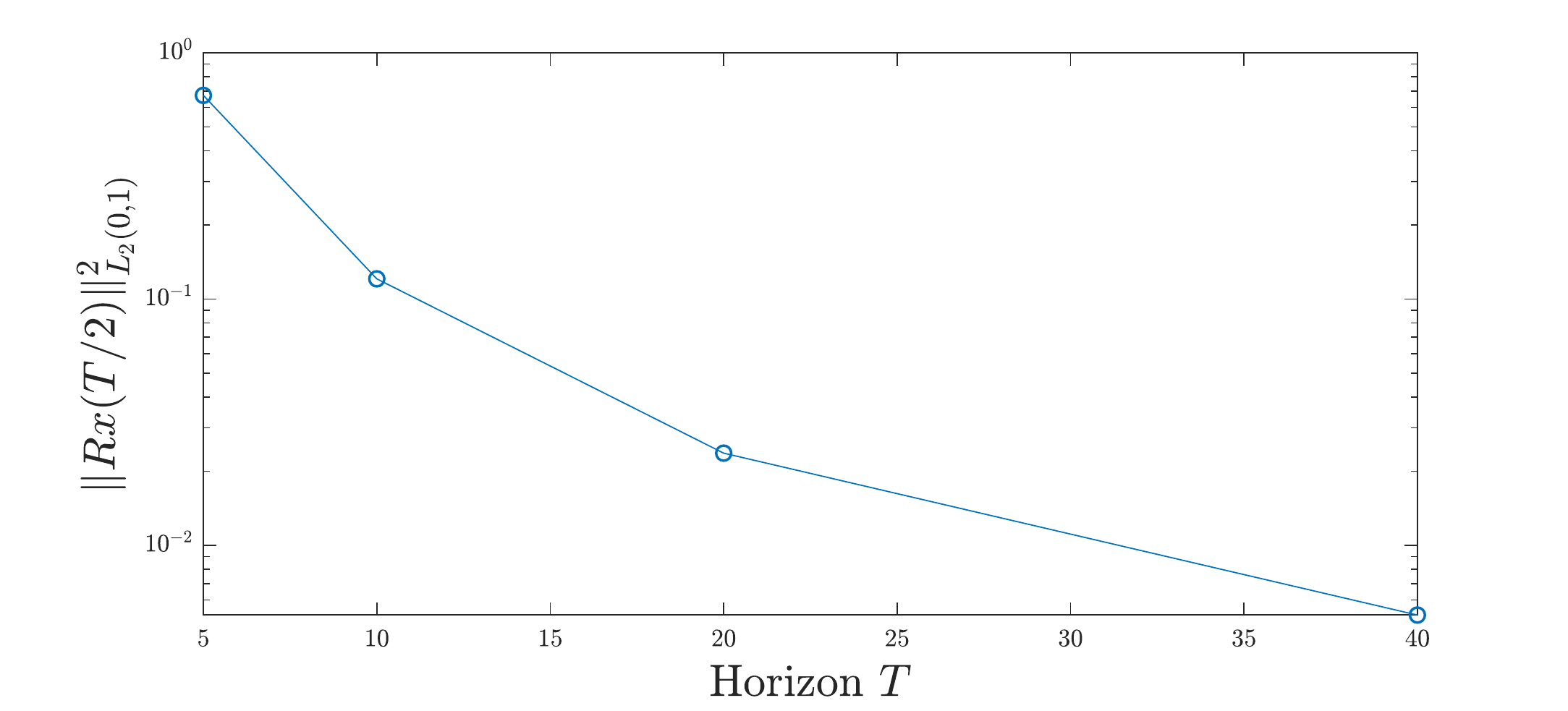}
	\caption{Top: Snapshots of the species distribution at time $t=T/2$ for different optimization horizons for the diffusion equation. Bottom: Distance to the subspace of thermodynamical equilibria at the midpoint of the time interval for different optimization horizons.}\label{fig:snapshots}
\end{figure}

{\bf Timoshenko beam}. We consider the example from \cite{Wu2021}, cf.\ also \cite[Example 7.1.4]{Jacob2012} of a Timoshenko beam in the state space $\calH = L^2(0,1;\mathbb{R}^4)$. The coordinates of $x\in L^2(0,1;\mathbb{R}^4)$ can be interpreted as follows: $x_1$ is the shear displacement, $x_2$ the transverse momentum, $x_3$ the angular displacement, and $x_4$ the angular momentum. As to the dynamics, we consider the unbounded skew-adjoint operator 
\begin{align*}
J := \begin{pmatrix}
0&\tfrac{\partial}{\partial z} &0 &-1\\
\tfrac{\partial}{\partial z} & 0&0&0\\
0&0&0&\tfrac{\partial}{\partial z}\\
1&0&\tfrac{\partial}{\partial z}&0
\end{pmatrix},
\end{align*}
defined on the domain 
$$
\dom J = \left\{x\in H^1(0,1;\R^4) \,:\,\begin{matrix}
x_2(0)=0\\
x_4(0)=0\\
x_1(1)=0\\
x_3(1)=0
\end{matrix} \right\}.
$$
The boundary conditions are modeling the fact that the beam is clamped in the left-hand side, i.e., shear and angular displacement vanish on the unclamped side and transverse and angular momentum vanish on the clamped side.

Further, we assume that we have internal damping. For this, consider the self-adjoint and bounded operator
$$
R = \operatorname{diag}(0,R_1,0,R_2),
$$
with $R_1,R_2>0$, i.e., the transverse momentum is damped by $R_1$ and the angular momentum is damped by $R_2$. For simplicity we set the matrix operator $Q$ from \cite{Wu2021} to $Q = I$.

The control of the beam is put into effect by two torques, one on the left half and one on the right half of the beam, which we can actuate by a scalar control input each. I.e.,
$$
(Bu)(z):= \begin{pmatrix}
0\\
0\\
0\\
\chi_{[0,\nu]}(z)u_1 + \chi_{[1-\nu,1]}(z)u_2
\end{pmatrix},
$$
where $\nu\in (0,1/2)$, $u = (u_1,u_2)^\top\in\R^2$ and $\chi_S(z)$ de\-notes the characteristic function of a set $S\subset [0,1]$. The corresponding conjugated output for $x\in L^2(0,1;\R^4)$ is given by 
$$
y_1 = \int_0^\nu x_4(z)\,dz\quad  \text{ and }\quad y_2 = \int_{1-\nu}^1 x_4(z)\,dz,
$$
respectively.

Also in this case, we expected and observed a turnpike behavior of optimal solutions towards the subspace
\begin{align*}
\ker R = \left\{x\in L^2(0,1;\R^4) : x_2(z)=x_4(z)=0 \text{ a.e.}\right\}.
\end{align*}
For numerical solution, we consider a finite difference approximation and $\nu = 0.5$. In Figure~\ref{fig:timoshenko} we depict the optimal solutions with $x_0=x_T \equiv 1$. We observe that the states $x_2$ and $x_4$ clearly show a turnpike behavior towards zero, while $x_3$ does not.
\begin{figure}[ht]
	\centering	
	\includegraphics[scale = 0.5]{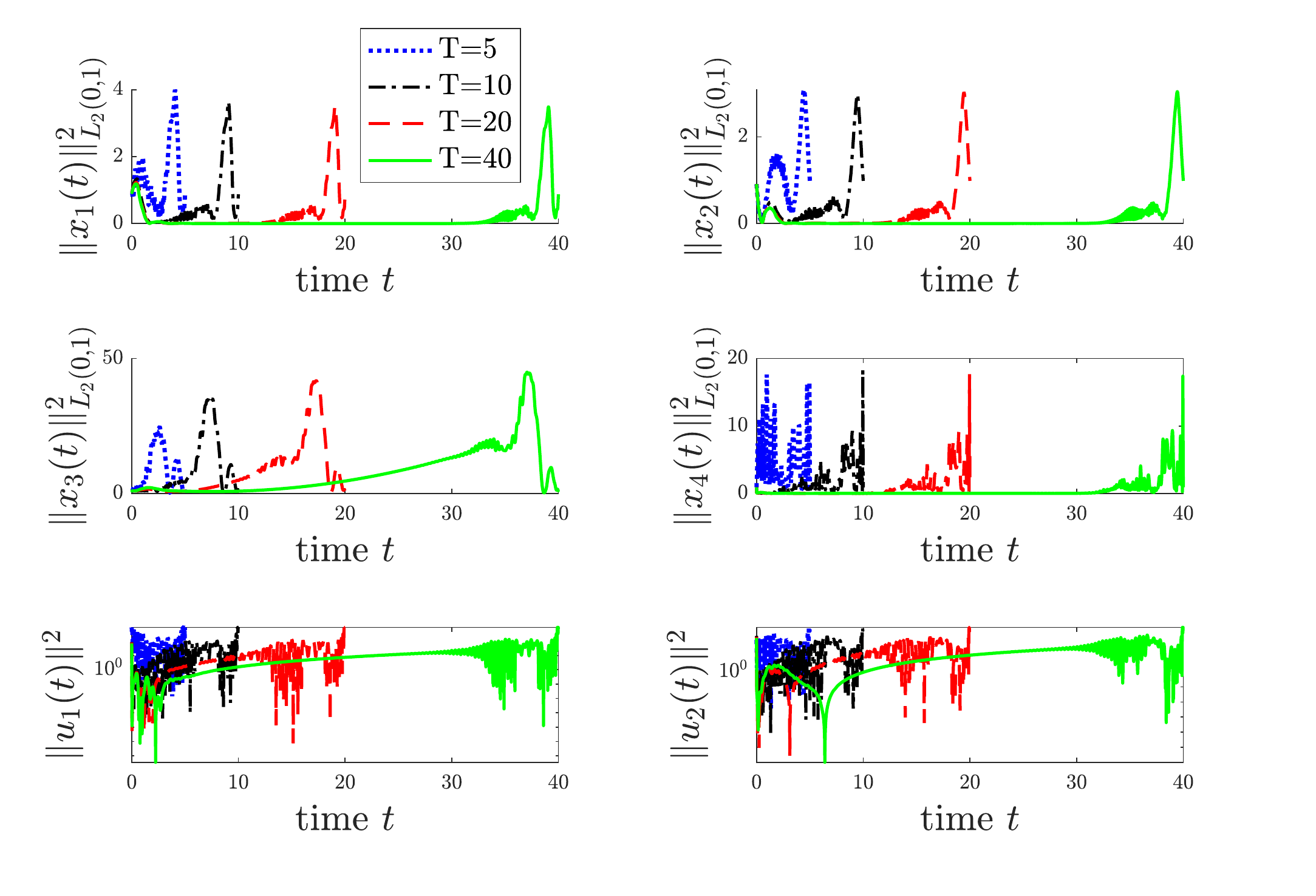}
	\caption{Energy of optimal states and control over time for different optimization horizons for the Timoshenko beam with constant one function as initial and terminal condition.}\label{fig:timoshenko}
\end{figure}
 In Figure~\ref{fig:timoshenko2} we depict the optimal solutions with $x_0(\omega) =x_T(\omega) = (\omega,1-\omega,\omega,1-\omega)^\top$ and observe the same behavior, i.e., $x_2$ and $x_4$ are close to zero in the middle part of the horizon, whereas $x_3$ is not.
\begin{figure}[ht]
	\centering	
	\includegraphics[scale = 0.44]{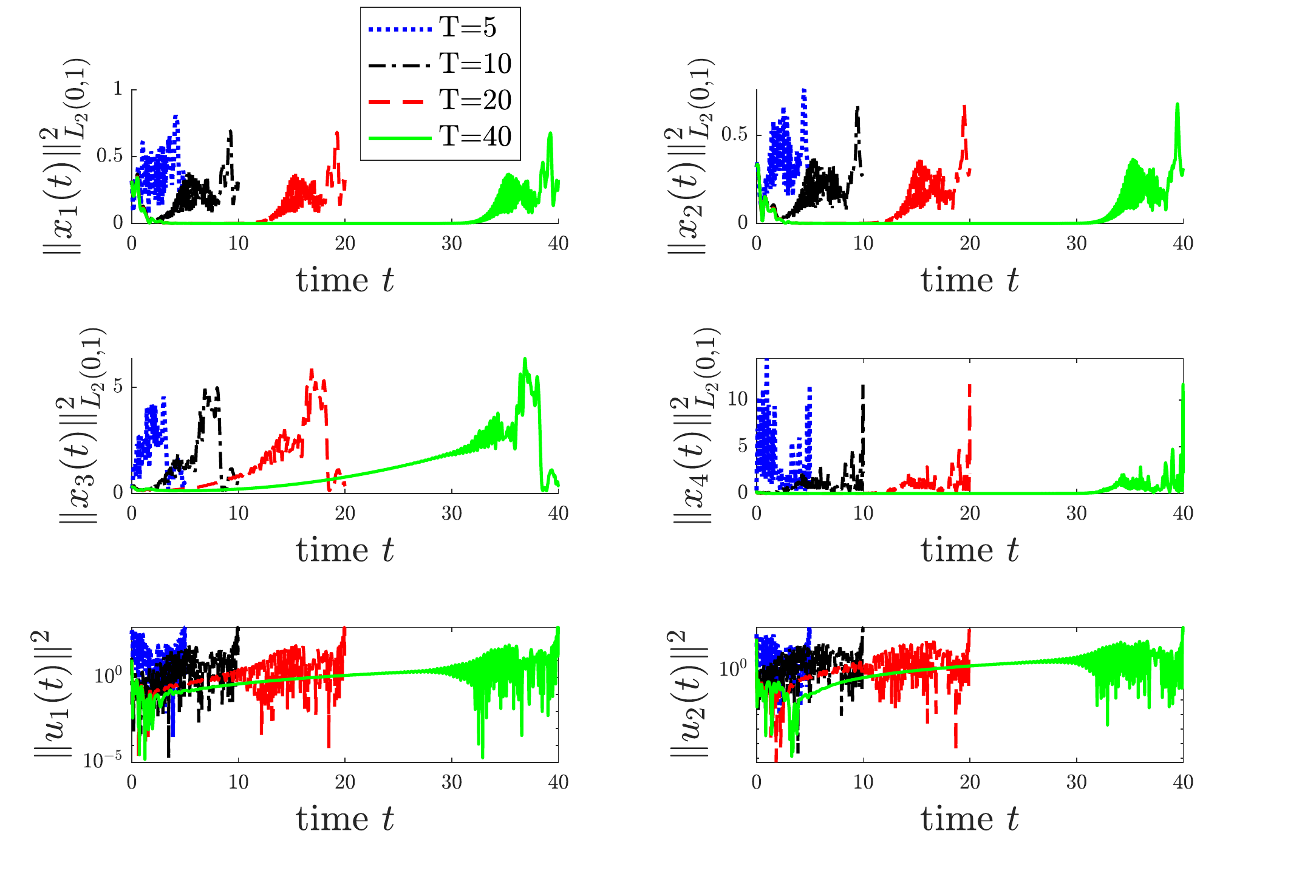}
	\caption{Energy of optimal states and control over time for different optimization horizons for the Timoshenko beam with linear initial and terminal condition in space.}\label{fig:timoshenko2}
\end{figure}

\subsection{A proof of subspace turnpike behavior}\label{sec:OCP}
In this part we formalize and give sufficient conditions for the observed subspace turnpike property for infinite-dimensional optimal control problems.

\begin{defn}[State Subspace Turnpike Property]\label{d:turnpike}
Let $A:\dom A\subset \calH\to \calH$ be a generator of a $C_0$-semigroup in $\calH$ and let $B\in L(\C^m,\calH)$. We say that a general OCP with linear dynamics of the form
\begin{align}
\begin{split}\label{e:lin_OCP}
\min_{u\in \calU_T}\,&\int_0^T \ell(x(t),u(t))\,dt\\
&\dot x = Ax + Bu\\
&x(0)=x_0,\quad x(T)=x_T
\end{split}
\end{align}
with $x_0,x_T\in\calH$ and a $C^1$-function $\ell : \calH\times\C^{m}\to\R$ has the {\em state integral turnpike property} on a set $S_{\rm tp}$ with respect to a subspace $\mathcal{V}\subset\calH$, if there exist continuous functions $F,T : S_{\rm tp}\to [0,\infty)$ such that for all $x_0\in S_{\rm tp}$ each optimal pair $(x^*,u^*)$ of the OCP \eqref{e:lin_OCP} with initial datum $x^*(0)=x_0$ and $T > T(x_0)$ satisfies
\begin{align}\label{e:integral_tp}
\int_0^T\dist^2(x^*(t),\calV)\,dt\,\le\,F(x_0).
\end{align}
\end{defn}

The following theorem provides the main result of this work and shows that a turnpike property is present under reachability assumptions. Recall that the {\em discrete spectrum} $\sd(R)$ of $R$ is the collection of all isolated eigenvalues of $R$.

\begin{thm}\label{t:turnpike}
Assume that $0\in\sd(R)\cup\rho(R)$. Let $T_0 > 0$ and set $\mathbb{X} := \{x_0\in\calH : 0\in\calR_{T_0}(x_0)\}$. Furthermore, assume that $x_T\in\calR(0)$. Then the OCP \eqref{e:ocp} has the state integral turnpike property on $\mathbb{X}$ with respect to $\calV = \ker R$.
\end{thm}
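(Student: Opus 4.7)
The plan combines three ingredients: the dissipation equality from Proposition~\ref{p:higherreg}, a spectral-gap estimate issuing from the hypothesis on $0$, and a bounded-cost candidate trajectory that parks at the origin for the majority of the horizon. By \eqref{e:diss}, every admissible pair with $x(0)=x_0$ and $x(T)=x_T$ satisfies
$$
\int_0^T\Re\<Bu(t),x(t)\>\,dt \;=\; H(x_T)-H(x_0) + \int_0^T\|R^{1/2}x(t)\|^2\,dt,
$$
so up to the $u$-independent constant $H(x_T)-H(x_0)$ the OCP is equivalent to minimizing $J(u):=\int_0^T\|R^{1/2}x(t)\|^2\,dt$. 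The hypothesis $0\in\sd(R)\cup\rho(R)$ moreover yields $\delta>0$ with $\sigma(R)\cap(0,\delta)=\emptyset$; letting $P$ be the orthogonal projection onto $\ker R$, the spectral theorem for the self-adjoint operator $R$ gives
$$
\|R^{1/2}x\|^2 \;=\; \|R^{1/2}(I-P)x\|^2 \;\ge\; \delta\|(I-P)x\|^2 \;=\; \delta\,\dist^2(x,\ker R)
$$
for every $x\in\dom R^{1/2}$.

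Next I build a concatenated candidate. Since $x_T\in\calR(0)$, fix once and for all $T_1>0$ and $u_2\in\calU_{T_1}$ with $x(T_1;0,u_2)=x_T$; note that $T_1$ depends only on the fixed data $x_T$. For each $x_0\in\mathbb{X}$ pick $u_1=u_1(x_0)\in\calU_{T_0}$ steering $x_0$ to $0$ in time $T_0$, and for $T > T(x_0):=T_0+T_1$ define
$$
\tilde u(t) := \begin{cases} u_1(t), & t\in [0,T_0],\\ 0, & t\in (T_0, T-T_1),\\ u_2(t-T+T_1), & t\in [T-T_1,T]. \end{cases}
$$
By \eqref{e:varofconst} and the semigroup property, the mild solution $\tilde x$ vanishes identically on $[T_0,T-T_1]$ and terminates at $x_T$; hence $\tilde u$ is admissible and
$$
J(\tilde u) \;=\; \int_0^{T_0}\|R^{1/2}\tilde x\|^2\,dt + \int_{T-T_1}^T\|R^{1/2}\tilde x\|^2\,dt \;=:\; C_1(x_0)+C_2,
$$
with $C_2$ depending only on the fixed pair $(x_T,u_2)$. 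Each summand can be bounded explicitly in terms of $H(x_0)$, $H(x_T)$, $\|B\|$ and $\|u_i\|_{L^2}$ by applying \eqref{e:diss} on the corresponding segment. Combining optimality $J(u^*)\le J(\tilde u)$ with the spectral gap inequality yields, for every $T>T(x_0)$,
$$
\delta\int_0^T\dist^2(x^*(t),\ker R)\,dt \;\le\; J(u^*) \;\le\; C_1(x_0)+C_2,
$$
so $F(x_0):=(C_1(x_0)+C_2)/\delta$ furnishes the bound \eqref{e:integral_tp}.

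The \textbf{main obstacle} is the \emph{continuity} of $F$ on $\mathbb{X}$: $C_1(x_0)$ is defined only once a steering control $u_1(x_0)$ has been chosen. To secure continuity I would construct a continuous local selection $x_0\mapsto u_1(x_0)\in\calU_{T_0}$, for instance via the minimum-$L^2$-norm solution of the affine reachability equation
$$
\int_0^{T_0}\calT(T_0-t)Bu(t)\,dt \;=\; -\calT(T_0)x_0,
$$
which depends linearly (hence continuously) on $x_0$; after restricting to a neighborhood of a base point and rescaling one can ensure the selection stays inside the closed convex set $\calU_{T_0}$ (which contains $0$ in its interior). Continuity of $\tilde x$ with respect to $x_0$ in $C([0,T_0];\calH)$, together with the Bochner continuity of $R^{1/2}\tilde x$ in $L^2$ established in the proof of Proposition~\ref{p:higherreg}, then transfers to continuity of $C_1(\cdot)$. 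The time threshold $T(x_0)=T_0+T_1$ is constant, hence trivially continuous.
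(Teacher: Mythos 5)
Your overall strategy --- using the dissipation equality \eqref{e:diss} to pass to the equivalent cost $J(u)=\int_0^T\|R^{1/2}x\|^2\,dt$, a spectral-gap lower bound $\|R^{1/2}z\|^2\ge\delta\,\dist^2(z,\ker R)$ from $0\in\sd(R)\cup\rho(R)$, and a concatenated candidate that sits at the origin on $[T_0,T-T_1]$ with constant threshold $T(x_0)=T_0+T_1$ --- is exactly the paper's proof, and those steps are all sound.

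The one place you go astray is the continuity of $F$, which you single out as the main obstacle and then attempt to repair with a continuous selection $x_0\mapsto u_1(x_0)$. This is both unnecessary and, as proposed, flawed. It is unnecessary because the bound on $C_1(x_0)$ can be made uniform over \emph{all} admissible steering controls: every $u_1\in\calU_{T_0}$ satisfies $\|u_1\|_{L^1(0,T_0;\C^m)}\le T_0u_{\max}$ with $u_{\max}:=\max_{v\in\UU}\|v\|$, the contraction property of $(\calT(t))_{t\ge0}$ gives $\|\tilde x\|_{L^\infty(0,T_0;\calH)}\le\|x_0\|+\|B\|T_0u_{\max}$, and since $\tilde x(T_0)=0$ the dissipation equality on $[0,T_0]$ yields
$$
C_1(x_0)\;\le\;H(x_0)+\|B\|T_0u_{\max}\bigl(\|x_0\|+\|B\|T_0u_{\max}\bigr),
$$
an explicit continuous function of $\|x_0\|$ that does not depend on which steering control was chosen; the same argument bounds $C_2$ by $\|B\|^2T_1^2u_{\max}^2$. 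It is flawed because in infinite dimensions the minimum-$L^2$-norm solution of the reachability equation need not depend continuously on the right-hand side: the input-to-state map generally does not have closed range, so its pseudo-inverse is linear but unbounded, and ``linear hence continuous'' fails. Moreover, rescaling a control to force it back into $\UU$ changes the state it reaches, so the rescaled control no longer steers $x_0$ to $0$. Replace your final paragraph with the uniform estimate above and the proof is complete and matches the paper's.
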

\begin{proof}
By assumption, there exist $T_1\ge 0$ and $u_j\in\calU_{T_j}$, $j=0,1$, such that $x(T_0;x_0,u_0)=0$ and $x(T_1;0,u_1)=x_T$. Set $T(x_0) := T_0+T_1$ for $x_0\in\mathbb{X}$ (i.e., $T(\cdot)$ is constant) and
\begin{align*}
u(t) :=\begin{cases}
u_0(t),   &t\in [0,T_0]\\
0  &t\in [T_0,T-T_1]\\
u_1(t-(T-T_1)),  &t\in [T-T_1,T].
\end{cases}
\end{align*}
By $x$ denote the corresponding state response, i.e.,
\begin{align*}
x(t)=
\begin{cases}
\mathcal{T}(t)x_0 + \int_0^t\mathcal{T}(t\!-\!s)Bu_0(s)\,ds &t\in [0,T_0]\\
0 &t\in [T_0,T\!-\!T_1]\\
\int_{T-T_1}^t\!\!\!\mathcal{T}(t\!-\!s)Bu_1(s\!-\!(T\!-\!T_1))\,ds &t\in [T\!-\!T_1,T].
\end{cases}
\end{align*}
Let $(x^*,u^*)\in C(0,T;\calH)\times \calU_T$ be an optimal state-control pair for \eqref{e:ocp} and $y^*\in C(0,T;\C^m)$ the corresponding output. Then for the equivalent cost functional $J$ (see \eqref{e:cost_R}) we have $J(u^*)\le J(u)$, i.e.,
\begin{align*}
\int_0^T\left\|R^{\frac12}x^\star(t)\right\|^2\,dt \leq  \int_0^T\left\|R^{\frac12}x(t)\right\|^2\,dt = \int_0^{T_0}\left\|R^{\frac12}x(t)\right\|^2\,dt + \int_{T-T_1}^{T}\left\|R^{\frac12}x(t)\right\|^2\,dt,
\end{align*}
since $x(t)=0$ for all $t\in [T_0,T-T_1]$. As $(\mathcal{T}(t))_{t\geq 0}$ generates a contraction semigroup, we estimate
\begin{align*}
\int_0^{T_0}\left\|R^{\frac12}x(t)\right\|^2\,dt
&= \int_0^{T_0}\Re\<Bu_0(t),x(t)\rangle\,dt - (H\circ x)\Big|_0^{T_0}\\
&\le H(x_0) + \|B\|\|u_0\|_{L_1(0,T_0;\C^m)}\|x\|_{L^\infty(0,T_0;\calH)}\\
&\le H(x_0) + \|B\|T_0u_{\max}\cdot\big(\|x_0\| + \|B\|T_0u_{\max}\big),
\end{align*}
where $u_{\max}:=\max_{v\in\UU}\|v\|$. Setting $\wt x(t):= x(t;0,u_1)$, we have $\int_{T-T_1}^T\|R^{\frac 12}x(t)\|^2\,dt = \int_0^{T_1}\|R^{\frac 12}\wt x(t)\|^2\,dt$ and thus
\begin{align*}
\int_{T-T_1}^T\|R^{\frac 12}x(t)\|^2\,dt\le\|B\|^2T_1^2u_{\max}^2.
\end{align*}
Hence, we have $\int_0^T\|R^{\frac12}x^\star(t)\|^2\,dt\le G(x_0)$ with a continuous function $G : \mathbb X\to [0,\infty)$, which is independent of~$T$.

Let $E$ denote the spectral measure of the self-adjoint operator $R$ and let $\sigma_+ := \min\sigma(R)\backslash\{0\}$, which is a positive value by assumption. Then for $z\in\dom R^{\frac 12}$ we have
\begin{align*}
\|R^{\frac 12}z\|^2
&= \int_{\sigma_+}^\infty\la\,d\|E_\la z\|^2\ge\sigma_+\cdot\|E([\sigma_+,\infty))z\|^2 = \sigma_+\cdot\dist^2(z,\ker R).
\end{align*}
Therefore, we obtain
$$
\int_0^T\dist^2(x^\star(t),\ker R)\,dt\,\le\,\frac{G(x_0)}{\sigma_+} =: F(x_0),
$$
which finishes the proof of the theorem.
\end{proof}

\begin{rem}
Let us briefly comment on the assumptions of the previous theorem.

(a) The imposed reachability assumptions can be relaxed in various ways: First, controllability could be replaced with stabilizability, if the stabilizing feedback satisfies the control constraints. Second, the terminal constraint $x(T)=x_T$ could be replaced by an inclusion $x(T)\subset \Psi$, where $\Psi\subset \calH$. Further, the role of the zero in $0\in R_{T_0}(x_0)$ and $x_T\in \calR(0)$ could be replaced by any controlled equilibrium in $\ker R$.

(b) The assumption $0\in \sigma_\text{d}(R) \cup \rho(R)$ is fulfilled, if $R$ has compact resolvent.

(c) If $\calH$ is finite-dimensional and the Kalman matrix of $(A,B)$ as full rank, one can show that the assumption $0\in \calR_{T_0}(x_0)$ is satisfied for some $T_0$ (and then for all $T>T_0$, as well) due to the spectral properties of $J-R$ and the assumption $0\in\inte{\UU}$, We are not aware of a corresponding result in the infinite-dimensional case.
\end{rem}

\section{Conclusions} \label{sec:con}

This paper has investigated optimal control problem for infinite-dimensional linear port-Hamiltonian systems under the intrinsic objective of minimizing the supplied energy. 
We have shown that similarly to the finite-dimensional case, this singular optimal control problem exhibits a turnpike phenomenon with respect to the subspace induced by the dissipation operator. 
We have motivated our findings via simulations of the Timoshenko-beam and the diffusion equation. 

\bibliographystyle{abbrv}
\bibliography{pHInfDim_Arxiv.bib}
\end{document}